\newtheorem{theorem}{Theorem}[section]
\newtheorem{definition}[theorem]{Definition}
\newtheorem{problem}[theorem]{Problem}
\newtheorem*{warm-up}{Warm-up}
\newcommand{\mH}{\mathcal{H}}
\newcommand{\mE}{\mathcal{E}}
\newcommand{\floor}[1]{\left \lfloor #1 \right \rfloor}
\title{A Note on the Minimum Number of Edges in Hypergraphs with Property O}
\author{Gal Kronenberg \thanks{School of Mathematical Sciences, Raymond and Beverly Sackler Faculty of Exact Sciences, Tel Aviv University, Tel Aviv, 6997801, Israel. E-mail: {\tt galkrone@mail.tau.ac.il}}  \and 
Christopher Kusch \thanks{Freie Universit\"at Berlin, Institut f\"ur Mathematik und Informatik, Arnimallee 3, 14195, Berlin, Germany and Berlin Mathematical School, Germany. E-mail: {\tt c.kusch@gmx.net}. Supported by a Berlin Mathematical School Phase II scholarship.} \and
Ander Lamaison \thanks{Freie Universit\"at Berlin and Berlin Mathematical School, Berlin, Germany. Email: {\tt lamaison@zedat.fu-berlin.de.} Supported by the Deutsche Forschungsgemeinschaft (DFG, German Research Foundation) under Germany's Excellence Strategy -- The Berlin Mathematics Research Center MATH+ (EXC-2046/1, project ID: 390685689).} \and 
Piotr Micek \thanks{Theoretical Computer Science Department, Faculty of Mathematics and Computer Science, Jagiellonian University, Krak\'ow, Poland, and Freie Universit\"at Berlin, Institut f\"ur Mathematik und Informatik, Arnimallee 3, 14195, Berlin, Germany. E-mail: {\tt piotr.micek@tcs.uj.edu.pl}. Partially supported by a Polish National Science Center grant (SONATA BIS 5; UMO-
2015/18/E/ST6/00299)} \and
Tuan Tran \thanks{ETH Zurich, Switzerland. E-mail: {\tt manh.tran@math.ethz.ch}. Research supported by the Czech Science Foundation, Grant number GJ16-07822Y, and with institutional support RVO:67985807.  This work has been done while TT was affiliated with the Institute of Computer Science of the Czech Academy of Sciences.}}
\date{}
\begin{document}

\maketitle

\begin{abstract}
	\noindent An oriented $k$-graph is said to have {\em Property O} if for every linear order of the vertex set, there is some edge oriented consistently with the linear order. Recently Duffus, Kay and R\"{o}dl investigated the minimum number $f(k)$ of edges in a $k$-uniform hypergaph
	with \emph{Property O}.
	They proved that $k! \leq f(k) \leq (k^2 \ln k) k!$, where the upper bound holds for sufficiently large $k$. In this short note we improve their upper bound by a factor of $k \ln k$ showing that $f(k) \le \left(\lfloor \frac{k}{2} \rfloor +1 \right) k! - \lfloor \frac{k}{2} \rfloor (k-1)!$ for every $k\geq 3$. We also show that their lower bound is not tight.
	Furthermore, Duffus, Kay and R\"{o}dl also studied the minimum possible number $n(k)$ of vertices in an oriented $k$-graph with Property O. For $k=3$ they showed that $n(3) \in \{6,7,8,9\}$, and asked for the precise value of $n(3)$. Here we show that $n(3)=6$.
\end{abstract}


\section{Introduction}
Extremal combinatorics is one of the central branches of modern combinatorial theory, which has developed spectacularly over the last few decades. A typical problem in extremal combinatorics has the following form: What is the maximum or minimum size of a finite structure which satisfies certain properties? One of the most famous examples is \emph{Property B}, first introduced by Bernstein in 1908 \cite{Bernstein} (see \cite{Probmeth} for a good overview), asking for the minimum number of edges in a $k$-uniform hypergraph such that every two colouring of its vertex set has a monochromatic edge.
 
Recently Duffus, Kay and R\"{o}dl \cite{DKR} introduced an analogue to Property B, called \emph{Property O}, with colouring replaced by order. 
An \emph{oriented $k$-graph} is a pair $\mH=(V, \mE)$, where $V$ is a finite set, and $\mE$ is a family of $k$-tuples of $V$ such that no $k$-tuple has a repeated element, and no two $k$-tuples induce the same set. We say $V$ is the vertex set of $\mH$, and $\mE$ is the edge set of $\mH$. In the case that $\mE$ contains a $k$-tuple for each $k$-subset of $V$, call $\mH$ a $k$-tournament. A tuple $(x_1,x_2,\ldots,x_{\ell})$ of distinct elements of $V$ is said to be consistent with a linear order  $<$ on $V$ if $x_1 < x_2 < \ldots < x_{\ell}$. 

\begin{definition}[Property O]
Given an oriented $k$-graph $\mH = (V, \mE)$, we say that $\mH$ has Property O if for every linear order $<$ of $V$, there exists an edge $\vec{e} \in \mE$ that is consistent with $<$. Furthermore, let $f(k)$ be the minimum number of edges in an oriented $k$-graph with Property O. That is,
\[ 
f(k) := \min \{|\mE | : \text{there exists an oriented $k$-graph $\mH =(V,\mE)$ with Property O}\}.
\]
\end{definition}

\noindent It is easy to check that $f(2)=3$ and an example for the upper bound is a cyclically ordered triangle. 
In \cite{DKR}, Duffus, Kay and R\"{o}dl initiated the study of $f(k)$, and established the following general bounds. 

\begin{theorem}[Duffus--Kay--R\"{o}dl]\label{thmDKR}
	The function $f(k)$ satisfies 
	\[
	k! \leq f(k) \leq (k^2 \ln k)k!,
	\] 
where the lower bound holds for all $k$ and the upper bound holds for $k$ sufficiently large.
\end{theorem}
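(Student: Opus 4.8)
The plan is to establish the two inequalities separately: the lower bound $f(k)\ge k!$ by a double-counting argument, and the upper bound $f(k)\le (k^2\ln k)k!$ by the probabilistic method.

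For the lower bound I would fix an arbitrary oriented $k$-graph $\mH=(V,\mE)$ with Property O and set $n=|V|$. The idea is to count in two ways the pairs $(\vec e,<)$ where $\vec e\in\mE$ is consistent with the linear order $<$ of $V$. On one hand, a fixed ordered $k$-set prescribes the relative order of its $k$ vertices, so it is consistent with exactly $n!/k!$ of the $n!$ linear orders of $V$; hence the number of such pairs is $|\mE|\cdot n!/k!$. On the other hand, Property O guarantees that each of the $n!$ linear orders has at least one consistent edge, so the number of pairs is at least $n!$. Comparing the two counts gives $|\mE|\cdot n!/k!\ge n!$, that is $|\mE|\ge k!$, and minimising over all such $\mH$ yields $f(k)\ge k!$.

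For the upper bound I would use a random construction on a ground set $V$ of size $n$, where $n$ is a parameter of order $k^2$ tuned at the end. I build a random oriented $k$-graph by treating each $k$-subset of $V$ independently: with probability $q$ I include it as an edge with a uniformly random one of its $k!$ orderings, and otherwise discard it. This automatically yields edges with distinct underlying $k$-sets, so the output is a legitimate oriented $k$-graph with expected number of edges $q\binom{n}{k}$. Fixing a linear order $<$, a given $k$-subset produces an edge consistent with $<$ exactly when it is included and receives the unique matching orientation, an event of probability $q/k!$; as the $\binom{n}{k}$ subsets are independent, the probability that no edge is consistent with $<$ equals $(1-q/k!)^{\binom{n}{k}}\le\exp\!\big(-q\binom{n}{k}/k!\big)$.

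Applying the union bound over all $n!$ linear orders, the failure probability for Property O is at most $n!\exp\!\big(-q\binom{n}{k}/k!\big)$, so choosing $q$ with $q\binom{n}{k}/k!>\ln(n!)$ makes this less than $1$ and guarantees an oriented $k$-graph with Property O having at most $q\binom{n}{k}\approx k!\ln(n!)$ edges. The only real content of the upper bound—and the step I expect to be the main obstacle—is the choice of $n$: since $k!\ln(n!)\approx k!\,n\ln n$ grows with $n$, I would take $n$ as small as the feasibility constraint $k!\ln(n!)\le\binom{n}{k}$ (equivalently $q\le 1$) allows. A short computation with Stirling's approximation shows that $n$ of order $k^2$ keeps $q$ well below $1$, and that taking $n\approx k^2/2$ makes $\ln(n!)\approx k^2\ln k$, which substituted into $k!\ln(n!)$ delivers the claimed bound $f(k)\le(k^2\ln k)k!$ for all sufficiently large $k$. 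Verifying this optimisation, rather than the probabilistic estimate itself, is where the work concentrates.
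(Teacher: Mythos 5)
Your lower-bound argument coincides exactly with the one given in the paper: each ordered $k$-set is consistent with $\binom{n}{k}(n-k)!=n!/k!$ of the $n!$ linear orders, and Property O forces $|\mathcal{E}|\cdot n!/k!\ge n!$. The upper bound is also the same in spirit as the original Duffus--Kay--R\"odl proof (random orientation, union bound over linear orders, $n$ of order $k^2$ dictated by the feasibility constraint $\binom{n}{k}\gtrsim k!\ln(n!)$, which is indeed where the work lies and which forces $n\ge(1-o(1))(k/e)^2$). The one genuine difference is that they orient \emph{every} $k$-subset, i.e.\ they take a full random $k$-tournament, whereas you sparsify with an inclusion probability $q$.

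That sparsification introduces a gap as written: from ``the failure probability is less than $1$'' you conclude the existence of a hypergraph with Property O \emph{and} at most $q\binom{n}{k}\approx k!\ln(n!)$ edges, but the edge count is now a random variable, and the positive-probability event of having Property O need not be realised by an instance with at most the expected number of edges. The standard repair is to push the union bound below $1/2$ and apply Markov (or a Chernoff bound) to the binomially distributed edge count, which costs a factor of $2$ in the final bound. With your choice $n\approx k^2/2$ this factor of $2$ overshoots the target $(k^2\ln k)k!$, since $k!\ln(n!)$ is already $\approx k^2\ln k\cdot k!$ there; you must therefore take $n=ck^2$ with $c$ somewhat smaller (any fixed $c$ with $1/e^2<c<1/4$ works for large $k$, the lower end being your feasibility constraint $q\le1$). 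Alternatively, simply set $q=1$: then the number of edges is the deterministic quantity $\binom{n}{k}$, no concentration step is needed, and one recovers exactly the original argument, where the only task is to choose $n$ minimal with $n!\,(1-1/k!)^{\binom{n}{k}}<1$ and to verify $\binom{n}{k}<(k^2\ln k)k!$ for that $n$.
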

Note that the lower bound follows from a simple counting argument, which we include here for convenience of the reader. Suppose that $\mH =([n],\mE)$ is an oriented $k$-graph that has Property O. Then every edge $\vec{e} \in \mE$ is consistent with exactly $\binom{n}{k} (n-k)! = \frac{n!}{k!}$ linear orders on $[n]$. Since $\mH$ has Property O, we must have $|\mE| \cdot \frac{n!}{k!} \geq n!$, implying $f(k) \geq k!$.

For the upper bound, Duffus et al. showed that a random $k$-tournament on $n=(k/e)^2(\pi \cdot \exp(e^2/2)\cdot k^3 \ln k)^{1/k}$ vertices with $\binom{n}{k}<k^2\ln(k)k!$ edges has Property O with positive probability. Furthermore they proved that almost all $k$-tournaments with $(1-o(1))\sqrt k\cdot  k!$ edges do not have Property O.

In this note we show that the lower bound in Theorem \ref{thmDKR} is not tight, and the upper bound in Theorem \ref{thmDKR} can be improved by a factor of $k \ln k$.

\begin{theorem}\label{UB}
For every integer $k\ge 3$, we have 
\[
k!+1 \le f(k) \leq \left(\floor{\frac{k}{2}}+1 \right) k! - \floor{\frac{k}{2}}(k-1)!.
\]
\end{theorem}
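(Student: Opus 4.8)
The plan is to prove the two inequalities separately, in both cases refining the elementary counting argument already recalled for Theorem~\ref{thmDKR}.

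\textbf{Lower bound.} For $f(k)\ge k!+1$ I would argue by contradiction, assuming that some oriented $k$-graph $\mathcal{H}=([n],\mathcal{E})$ with Property~O has exactly $k!$ edges. The counting argument shows that the average, over all $n!$ linear orders, of the number of consistent edges equals $|\mathcal{E}|\cdot\tfrac{n!/k!}{n!}=1$; since Property~O forces this count to be at least $1$ for every order, and a nonnegative integer quantity with average $1$ that is everywhere at least $1$ must be identically $1$, \emph{every} linear order is consistent with exactly one edge. Thus the consistency classes $C_{\vec{e}}=\{<\,:\,\vec{e}\text{ is consistent with }<\}$ partition the set of all orders. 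Two edges have a common consistent order iff their induced relative orders are compatible, so disjointness of the classes forces the $k!$ edges to be pairwise \emph{conflicting}, meaning any two share two vertices appearing in opposite relative order. I would then derive a contradiction from this rigidity: counting, for each pair $\{u,v\}$, the orders with $u$ before $v$ shows that equally many edges orient $u\to v$ as $v\to u$, and, more importantly, a pairwise-conflicting family is too constrained to reach $k!$ edges while still covering all orders. For $k=3$ this is transparent: a family of triples in which every two share a reversed pair is either a star through a fixed pair (hence at most two edges once the shared orientation is forced to alternate) or is contained in a $4$-element set (hence at most $\binom{4}{3}=4$ edges, covering only $4\cdot\tfrac{n!}{3!}<n!$ orders); either way $6=3!$ conflicting edges covering everything is impossible. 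The general case should follow by bounding the size, or the total coverage, of a pairwise-conflicting family.

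\textbf{Upper bound.} For the upper bound I would give an explicit construction, guided by the shape of the target value written as $\left(\floor{\frac{k}{2}}+1\right)k!-\floor{\frac{k}{2}}(k-1)!$. This strongly suggests taking $\floor{\frac{k}{2}}+1$ reference linear orders $\sigma_0,\dots,\sigma_{\floor{k/2}}$ of a common vertex set and, for each $\sigma_i$, pre-orienting a family of $k!$ carefully chosen $k$-sets according to $\sigma_i$, with consecutive references arranged to share exactly $(k-1)!$ of these oriented sets (so that no $k$-set is oriented twice and inclusion--exclusion returns precisely the claimed count). The references would be chosen -- for instance as suitable reversals or rotations of one another -- so that for every linear order $<$ there is some $i$ and some pre-oriented $k$-set on which $<$ agrees with $\sigma_i$; this is exactly the assertion that $<$ has a consistent edge, i.e.\ that $\mathcal{H}$ has Property~O. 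Concretely, I expect coverage to reduce to a monotone-subsequence statement: among $\floor{\frac{k}{2}}+1$ appropriately spread references, every order contains a length-$k$ subsequence that is increasing with respect to one of them and that lies in the pre-oriented family; the parameter $\floor{\frac{k}{2}}$ should be exactly what is needed for this to hold.

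\textbf{Main obstacle.} In the lower bound, the delicate point is making the ``pairwise-conflicting families cannot cover'' step work uniformly in $k$, since for large $k$ such families need not sit on a bounded vertex set and the clean $k=3$ classification is no longer available. In the upper bound, the crux is the single coverage lemma -- choosing the references and the $k!$ pre-oriented sets so that every order aligns with some reference on a pre-oriented set -- while keeping the pairwise overlaps between references under exact control, so that the edge count collapses to $\left(\floor{\frac{k}{2}}+1\right)k!-\floor{\frac{k}{2}}(k-1)!$ with no over-counting.
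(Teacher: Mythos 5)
Both halves of your proposal stop short of an actual proof, and in each case the missing step is precisely where the paper's argument does its real work.

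For the lower bound, you correctly reduce to the situation $|\mathcal{E}|=k!$ with every linear order consistent with exactly one edge, and correctly observe that the edges must then be pairwise conflicting (sharing two vertices in opposite relative order). But the claim that such a family ``is too constrained to reach $k!$ edges while still covering all orders'' is exactly what needs to be proved, and you leave it open for general $k$; even your $k=3$ case analysis is shaky (for three pairwise-conflicting triples through a common pair $\{u,v\}$, the conflicts need not all be realized on $\{u,v\}$, so the ``forced to alternate'' step does not immediately bound the family by two). The paper finishes quite differently and completely: assuming $\vec{e_1}=(1,\dots,k)$, it restricts attention to the $k!$ orders $\bar{\sigma}$ obtained by permuting $[k]$ and appending the remaining vertices in a fixed order. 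Each such order is consistent with exactly one edge, and the number of $\sigma$ for which a fixed edge $\vec{e_i}$ is consistent with $\bar{\sigma}$ equals $k!/(|\vec{e_i}\cap[k]|)!$, which is $1$ for $i=1$ and divisible by $k$ for $i>1$. Summing gives $k!\equiv 1\pmod k$, a contradiction for $k\ge 3$. This divisibility argument is the idea your outline lacks, and I see no easy way to complete your ``pairwise-conflicting families cannot cover'' route without something of comparable strength.

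For the upper bound, you reverse-engineer the arithmetic form of the bound into a guessed architecture ($\lfloor k/2\rfloor+1$ reference orders, each carrying $k!$ pre-oriented sets, with consecutive references overlapping in $(k-1)!$ sets), but you give no concrete construction and no proof of the coverage lemma you say it reduces to. Moreover, the paper's construction decomposes the count differently, as $(k-1)!\bigl(1+(k-1)(\lfloor k/2\rfloor+1)\bigr)$: one starts with $(k-1)!$ edges $(\pi_j(\mathbf{x}),a_j)$, one per relative order of $k-1$ base vertices; if the edge matching the true relative order of $\mathbf{x}$ fails, the new vertex $a_j$ sits in one of $k-1$ other positions, and for each such position one adds $\lfloor k/2\rfloor+1$ edges obtained by substituting a fresh vertex into the odd positions and the last position of the correctly ordered $k$-tuple. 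The point is that the ``gaps'' $(-\infty,y_2),(y_2,y_4),\dots$ associated with these substitutions cover the whole order, so one of the new edges is always consistent; this pigeonhole on alternating positions is the actual source of the factor $\lfloor k/2\rfloor+1$, and nothing in your sketch supplies an analogue of it. As written, neither bound is established.
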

  
In Subsection~\ref{subsec:UB} we provide an explicit construction for showing the upper bound, and in Subsection~\ref{subsec:LB} we give a proof for the lower bound.

Another natural problem posed by Duffus, Kay and R\"{o}dl \cite{DKR} is to determine the minimum number of vertices in an oriented $k$-graph with Property O.
\begin{definition}
	For $k\geq 2$ we define 
\begin{equation*}
	n(k) := \min \{ |V| :\text{there exists an oriented } k \text{-graph }\mH =(V,\mE) \text{ having Property O}\}.
\end{equation*}
\end{definition}
Duffus et al. 
proved $6\leq n(3)\leq 9 $; for the upper bound they gave a construction and the lower bound was obtained via an exhaustive computer search. In Section~\ref{n(3)} we show $n(3)=6$ by providing two different constructions.


\section{Proof of Theorem~\ref{UB}}\label{sec:UB}

\subsection{Upper bound}\label{subsec:UB}
In this subsection we will construct an oriented $k$-graph with $\left(\floor{\frac{k}{2}}+1 \right) k! - \floor{\frac{k}{2}}(k-1)!$ edges possessing Property O. To aid the reader, we first describe the idea behind our construction for the case $k=3$. 

We start by defining two edges $(x,y,a)$ and $(y,x,b)$. Any ordering is consistent with the relative order of exactly one of these edges with respect to the positions of $x$ and $y$. If it happens to be $x<y$, but the edge $(x,y,a)$ is not consistent with the ordering, then there are two possibilities for the position of $a$ with respect to both $x$ and $y$. For each possibility we introduce one new vertex and two edges, such that at least one of them is consistent with the ordering. 

\begin{warm-up}\label{f(3)}
There is an oriented $3$-graph with $10$ edges having Property O.
\end{warm-up}

\begin{proof}
Let $\mH$ be an oriented $3$-graph with vertex set $V=\{ x,y, a,b,c,d,e,f \}$, and edge set
	 \begin{center}
	$\mE = \{(x,y,a),(a,x,c),(c,x,y),(x,a,d),(d,a,y),(y,x,b),(b,y,e),(e,y,x),(y,b,f),(f,b,x) \}.$
		\end{center}
Clearly $\mH$ has 10 edges. It remains to show that $\mH$ possesses Property O. Let $<$ be an arbitrary ordering of $V$. Since either $x< y$ or $y< x$, let us first suppose $x< y$. If the edge $(x,y,a)$ is not consistent with $<$, then we either have $a<x< y$ or $x<a<y$. If $a<x< y$, then
either $(a,x,c)$ or $(c,x,y)$ is consistent with $<$. On the other hand, if $x<a<y$, then one of the edges $(x,a,d)$ and $(d,a,y)$ is consistent with $<$.

Now, if $y<x$ but $(y,x,b)$ is not consistent with $<$, then either $b<y<x$ or $y<b<x$. In the first case one of the edges $(b,y,e)$  
and $(e,y,x)$ is consistent with $<$, and in the latter one of the edges $(y,b,f)$ and $(f,b,x)$ is consistent with $<$. Hence $\mH$ has Property O, 
as desired.
\end{proof}

To prove the upper bound in Theorem~\ref{UB}, we will generalise the above construction. We begin with $(k-1)+(k-1)!$ vertices $x_1,\ldots x_{k-1},a_1,\ldots,a_{(k-1)!}$, and $(k-1)!$ edges $\left(\pi_1,a_1\right),\ldots,\left(\pi_{(k-1)!},a_{(k-1)!}\right)$, where $\pi_1,\ldots,\pi_{(k-1)!}$ are permutations (viewed as $(k-1)$-tuples) of $\{x_1,\ldots,x_{k-1}\}$. Let $<$ be any linear order of the vertices. We can find $j \in [(k-1)!]$ such that $\pi_j$ is consistent with $<$. If the edge $(\pi_j,a_j)$ is not consistent with $<$, then there are $(k-1)$ possible locations for $a_j$ (in the restriction of $<$ to $x_1,\ldots,x_{k-1},a_i$). For each possibility we introduce one new vertex and  $\floor{\frac{k}{2}}+1$ edges such that at least one of the edges will be consistent with $<$. More details can be found below.  

\begin{proof}[Proof of Theorem~\ref{UB} (upper bound)]
We shall construct an oriented $k$-graph $\mH = (V,\mE)$ with the desired property. Let
\[
V=\left\{x_1,x_2,\ldots,x_{k-1},a_1,\ldots,a_{(k-1)!}, a_1^{(1)},a_1^{(2)},\ldots,a_1^{(k-1)},\ldots,a_{(k-1)!}^{(1)},a_{(k-1)!}^{(2)},\ldots,a_{(k-1)!}^{(k-1)}  \right\},
\] 
and let $\mE  $ be the set of $k$-tuples of $V$ defined as follows. Let $\pi_1,\ldots,\pi_{(k-1)!}$ be all possible permutations of $\{x_1,\ldots,x_{k-1}\}$ viewed as $(k-1)$-tuples.
\begin{itemize}
	\item[(1)] We put all $k$-tuples of the form $(\pi_j,a_j)$ into $\mE$. There are $(k-1)!$ such $k$-tuples.
	\item[(2)] For each $j \in [(k-1)!]$ we will add 
	$(k-1)\left( \floor{\frac{k}{2}}+1 \right)$ edges to $\mE$ as follows. For every $i \in [k-1]$ and $\ell \in \{1,3,\ldots, 2\floor{\frac{k}{2}}-1,k\}$, let $\pi_j^{(i)}$ be the $k$-tuple obtained by inserting $a_j$ into the $i$-th position of $\pi_j$, and let $\pi_j^{(i,\ell)}$ be the $k$-tuple obtained by replacing the $\ell$-th element of $\pi_j^{(i)}$ with $a_j^{(i)}$. For example, if $k=4, \pi_j=(x_1,x_3,x_2)$ and $i=2$, then $\pi_j^{(2)}=(x_1,a_j,x_3,x_2)$, $\pi_j^{(2,1)}=(a_j^{(2)},a_j,x_3,x_2)$, $\pi_j^{(2,3)}=(x_1,a_j,a_j^{(2)},x_2)$ and $\pi_j^{(2,4)}=(x_1,a_j,x_3,a_j^{(2)})$. Finally, we put $\pi_j^{(i,\ell)}$ into $\mE$.

\end{itemize}

It is clear that $\mH=(V,\mE)$ is an oriented $k$-graph with $|V|=k!+k-1$ and	
\[
|\mE | = (k-1)!+ (k-1) \left( \floor{\frac{k}{2}} +1 \right)\cdot (k-1)! =  \left( \floor{\frac{k}{2}} +1 \right) k! - \floor{\frac{k}{2}}(k-1)!.
\]
To see that $\mH$ has Property O, let $<$ be an arbitrary linear order on $V$. Since $\{\pi_1,\ldots,\pi_{(k-1)!}\}$ is the set of all permutations of $\{x_1,\ldots,x_{k-1}\}$, there exists $j \in [(k-1)!]$ such that $\pi_j$ is consistent with $<$. If the edge $(\pi_j,a_j)$ is consistent with $<$, then we are done. Otherwise, there is $i \in [k-1]$ such that $\pi_j^{(i)}$ is consistent with $<$. By a straightforward but slightly tedious case-by-case analysis, one can show that one of the edges $\pi_{j}^{(i,\ell)}$, $\ell \in \{1,3,\ldots, 2\floor{\frac{k}{2}}-1,k\}$, will be consistent with $<$.
\end{proof}

\subsection{Lower bound}\label{subsec:LB}
In this subsection we will prove the lower bound $f(k) \ge k!+1$.

\begin{proof}[Proof of Theorem \ref{UB} (lower bound)] 
	Suppose for the contrary that there exists an oriented $k$-graph $\mH=([n],\mE)$ with Property O such that $|\mE| \le k!$. Observe that each edge $\vec{e} \in \mE$ is consistent with exactly $\binom{n}{k}(n-k)! = \frac{n!}{k!}$ linear orders of $[n]$. Since $\mH$ has Property O, we must have $|\mE| \cdot \frac{n!}{k!} \geq n!$. Hence $|\mE|= k!$, and each linear order of $[n]$ is consistent with exactly one edge.
	
	Write $\mE=\{\vec{e_1},\ldots,\vec{e_{k!}}\}$. Without loss of generality we can suppose that $\vec{e_1}=(1,2,\ldots,k)$. We associate to each permutation $\sigma:[k]\rightarrow [k]$ a linear order $\bar{\sigma}$ on $[n]$, defined by
	\[
	\sigma(1)\overset{\bar{\sigma}}{<}\sigma(2) \overset{\bar{\sigma}}{<}\ldots \overset{\bar{\sigma}}{<}\sigma(k) \overset{\bar{\sigma}}{<}k+1 \overset{\bar{\sigma}}{<} k+2 \overset{\bar{\sigma}}{<}\ldots \overset{\bar{\sigma}}{<} n.
	\]
	For $1 \le i \le k!$, let $S_i=\{\sigma:\vec{e_i} \enskip \text{is consistent with} \enskip \bar{\sigma}\}$. From the discussion above, we learn that each permutation $\sigma: [k] \rightarrow [k]$ is contained in exactly one $S_i$, and so
	\[
	\sum_{1 \le i \le k!}|S_i|=k!.
	\]
	Note that
	\[
	|S_i|=\frac{k!}{(|\vec{e_i}\cap [k]|)!} \enskip \text{for every $i \in [k!]$ such that $S_i \ne \emptyset$,}
	\]
	since whether $\sigma \in S_i$ depends only on the relative order of $\vec{e_i}\cap [k]$. This together with the assumption that $\vec{e_1}$ and $\vec{e_i}$ induce different $k$-sets implies that $|S_1|=1$ and $|S_i|$ is divisible by $k$ for $i>1$. Therefore, we get a contradiction
	\[
	0\equiv k!=\sum_{1 \le i \le k!} |S_i|\equiv 1 \pmod k,
	\]
	finishing our proof.
\end{proof}


\section{Oriented \texorpdfstring{$3$}{3}-graphs with Property O}\label{n(3)}
In this section we will construct two oriented $3$-graphs on $6$ vertices having Property O. Combined with the lower bound $n(3) \ge 6$ given in \cite{DKR}, this shows $n(3)=6$.

\paragraph{First construction}
For each $k\ge 2$ we construct an oriented $k$-graph $\mH_k=(V_k,\mE_k)$ with Property O, where 
\begin{equation}\label{eq:size}
|V_k|=3\cdot 2^{k-2} \enskip \text{and} \enskip |\mE_k|=3^{k-1}\cdot 2^{\binom{k-1}{2}}.     
\end{equation}
Let $\mH_2=(V_2,\mE_2)$ be an oriented $3$-cycle. It is easy to see that $\mH_2$ has Property O, and the sizes of its vertex set and edge set are given by \eqref{eq:size}. 

Suppose that we have constructed an oriented $k$-graph $\mH_k=(V_k,\mE_k)$ with the desired properties. Now make two disjoint copies $\mH_X = (X, \mE_X)$ and $\mH_Y=(Y,\mE_Y)$ of $\mH_k=(V_k,\mE_k)$.
Let $\mH_{k+1}=(V_{k+1},\mE_{k+1})$ be an oriented $(k+1)$-graph with vertex set $V_{k+1}=X\cup Y$, and edge set
\[
\mE_{k+1}=\left\{(x_1,\ldots,x_k,y):y \in Y, (x_1,\ldots,x_k)\in \mE_X \right\} \cup \left\{(y_1,\ldots,y_k,x):x \in X, (y_1,\ldots,y_k)\in \mE_Y\right\}
\]
(see Fig. 1).

\begin{center}

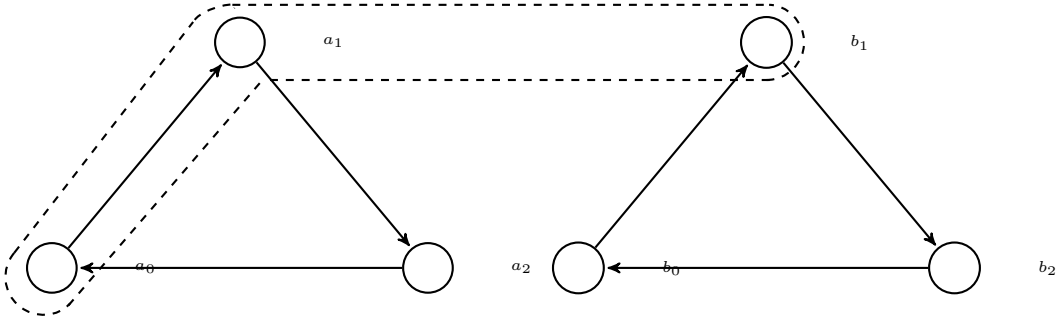
\begin{figure}[ht] \label{fig_n(3)}

\begin{tikzpicture}[-,>=stealth',shorten >=1pt,auto,node distance=5cm,
      thick,main node/.style={circle,fill=none,draw,font=\sffamily\tiny\bfseries}]
\hspace*{0.08\linewidth}

      \node[main node] (1) at (0,0) {$a_0$};
      \node[main node] (2) at (5,0) {$a_2$};
      \node[main node] (3) at (7,0) {$b_0$};
      \node[main node] (4) at (12,0) {$b_2$};
      \node[main node] (5) at (2.5,3) {$a_1$};
      \node[main node] (6) at (9.5,3) {$b_1$};
      
     \draw[->] (1) -- (5) node[pos=.5,above] {};
     \draw[->] (5) -- (2) node[pos=.5,right] {};
     \draw[->] (2) -- (1) node[pos=.5,above] {};
     \draw[->] (4) -- (3) node[pos=.5,right] {};
     \draw[->] (3) -- (6) node[pos=.5,above] {};
     \draw[->] (6) -- (4) node[pos=.5,right] {};
     
     \draw[dashed] (-0.5,0.2) arc (140:320:0.5);
     \draw[dashed] (9.5,2.5) arc (-90:90:0.5);

     \draw[dashed] (-0.5,0.2) --(1.9, 3.3) node[pos=.5,left] {};
     \draw[dashed] (0.22,-0.5) --(2.8,2.5) node[pos=.5,left] {};
     \draw[dashed] (2.4, 3.5) --(9.6,3.5) node[pos=.5,left] {};
     \draw[dashed] (2.9,2.5) --(9.6,2.5) node[pos=.5,left] {};
     
     \draw[dashed] (1.9,3.3) to [out=55,in=45] (2.4,3.43);

    \end{tikzpicture}
    \caption{The oriented $3$-graph $\mathcal{H}_3$ with the edge $(a_0,a_1,b_1)$ depicted.}
\end{figure}
\end{center}
\vspace*{-1cm}

Let us show that $\mH_{k+1}=(V_{k+1},\mE_{k+1})$ satisfies \eqref{eq:size}. Indeed, from the definition of $V_{k+1}$ and $\mE_{k+1}$ we find $|V_{k+1}|=2|V_k|=2\cdot 3\cdot 2^{k-2}=3\cdot 2^{k-1}$, and
\[
|\mE_{k+1}|=|V_{k+1}|\cdot |\mE_k|=3\cdot 2^{k-1}\cdot 3^{k-1}\cdot 2^{\binom{k-1}{2}}=3^{k}\cdot 2^{\binom{k}{2}}.
\]

To see that $\mH_{k+1}$ has Property O, let $<$ be any linear order on $V_{k+1}$. We can find an edge of $\mE_{k+1}$ which is consisten with $<$ as follows.
\begin{itemize}
    \item[(1)] Suppose first that there is some vertex $y \in Y$ such that $y>x$ for all $x \in X$. Because $\mH_X$ is isomorphic to $\mH_k$, some tuple $(x_1,\ldots,x_k) \in \mE_X$ must be consistent with $<$. Since $y>x_k$, the edge $(x_1,\ldots,x_k,y)$ is consistent with $<$.
    \item[(2)] Suppose now that there exists $x \in X$ such that $x>y$ for every $y \in Y$. By the same argument as above, we can show that some edge of the form $(y_1,\ldots,y_k,x)$ is consistent with $<$.
\end{itemize}

\paragraph{Second construction} The second example is a simple modification of the construction given in Section \ref{sec:UB}. Instead of using the vertices $e$ and $f$, we can use $c$ and $d$ again: Simply replace $e$ by $d$ and $f$ by $c$. So we get an oriented $3$-graph $\mH$ with vertex set $V= \{ x,y,a,b,c,d \}$ and edge set
	 \begin{center}
	$\mE = \{(x,y,a),(a,x,c),(c,x,y),(x,a,d),(d,a,y),(y,x,b),(b,y,d),(d,y,x),(y,b,c),(c,b,x) \}.$
		\end{center}
	One can use the same proof to show that 
	$\mH$ possesses Property O. 
	

\section{Concluding Remarks}
In this note we have shown that $k!+1\le f(k)\leq \left(\floor{\frac{k}{2}}+1 \right) k! - \floor{\frac{k}{2}}(k-1)!$ for every $k\geq 3$. 
The main open problem regarding the asymptotic behaviour of $f(k)$ is the following.
\begin{problem}[Duffus--Kay--R\"{o}dl \cite{DKR}]
	Is it true that $\frac{f(k)}{k!} \rightarrow \infty $ as $k\rightarrow\infty$?
\end{problem}

\noindent We  believe that the answer should be yes. However, we have not even been able to show that there is some absolute constant $c>1$ such that $f(k)>ck!$ for $k$ sufficiently large. Of course, any improvement of the upper bound would be interesting as well.

Recall that $n(k)$ is the minimum possible number of vertices in an oriented $k$-graph with Property O. It is easy to see that $n(2)=3$, while showing $n(3)=6$ requires some effort (see Section \ref{n(3)}). This is all we know about the precise values of $\{n(k)\}_{k\ge 2}$. However, for large $k$, one can determine $n(k)$ asymptotically.
Indeed, a trivial lower bound on $n(k)$ is $\left(\frac{k}{e}\right)^2$, since the number of edges is at least $k!$ and at most $\binom{n(k)}{k}$. On the other hand, Duffus et al. \cite[pp.~3--4]{DKR} showed that a random $k$-tournament on $n=\left(\frac{k}{e}\right)^2(\pi\cdot \exp(e^2/2)\cdot k^3\ln k)^{1/k}$ vertices has Property O with positive probability. Hence 
\[
n(k)\le \left(\frac{k}{e}\right)^2(\pi\cdot \exp(e^2/2)\cdot k^3\ln k)^{1/k}=(1+o(1))\left(\frac{k}{e}\right)^2,
\]
and so $n(k)=(1+o(1))\left(\frac{k}{e}\right)^2$.

The second construction in Section \ref{n(3)} has fairly few edges, namely 10, and $n(3)=6$ vertices. This naturally leads us to the following question.

\begin{problem}
Let $k\geq 3$ be an integer. Is there an oriented $k$-graph with $n(k)$ vertices and $f(k)$ edges having Property O?
\end{problem}

\subsection*{Acknowledgment}

The authors would like to thank the participants (in particular to Shagnik Das) of the ``Open Problems in Combinatorics and Graph Theory 2016" workshop of the FU Berlin Combinatorics and Graph Theory group for stimulating discussions on the topic, as well as Tibor Szab\'o and the second author for organising it.

\end{document}